\DeclareMathOperator*{\supp}{supp}
\newcommand{\norm}[1]{\|#1\|}
\newcommand{\R}{\mathbb{R}}
\theoremstyle{theorem}
\newtheorem{theorem}{Theorem}[section] %
\newtheorem{lemma}[theorem]{Lemma}
\theoremstyle{definition}
\theoremstyle{remark}
\newtheorem{remark}[theorem]{Remark}
\newtheorem{example}[theorem]{Example}
\numberwithin{equation}{section}
\newcommand{\abs}[1]{\left\vert#1\right\vert}
\newcommand{\dint}{\,\mathrm{d}}
\title[An effective van der Corput-type method in higher dimensions]{An effective van der Corput-type method in higher dimensions}
\author{Shaozhen Xu}
\address{School of Information Engineering, Nanjing Xiaozhuang University, Nanjing 211171, China}
\email{shaozhen@nju.edu.cn}
\begin{document}
	
	\begin{abstract}
			Using the birational map between a smooth toric variety (adapted to the phase function of the oscillatory integral) and $\R^n\textbackslash\{0\}$, we can effectively carry out the van der Corput-type analysis in higher dimensions. This allows us to give an elegant derivation of the leading term in Varchenko's asymptotic expansion \cite{Var76}. We expect that this observation may have further applications to other problems involving oscillatory integrals. 
	\end{abstract}
	
	\maketitle
	
	\setcounter{tocdepth}{1}
	
	\tableofcontents
	\section{Introduction}
	 For the oscillatory integral
	 \begin{equation}
	 	I(\lambda)=\int_{\R^n}e^{i\lambda f(x)}\varphi(x)\dint x.
	 \end{equation}
	 The classical van der Corput lemma states that
	 \begin{lemma}
	 	If $n=1, f^{(k)}(x)\geq 1$ on $[a,b]$, and in the case $k=1$, $f^{'}(x)$ is also monotone,
	 	then there is a constant $C_{\varphi}$ such that
	 	\begin{equation}
	 		|I(\lambda)|\leq C_{\varphi}\lambda^{-1/k}
	 	\end{equation}
	 \end{lemma}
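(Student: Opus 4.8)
The plan is to first remove the amplitude $\varphi$ by an integration by parts, reducing to the model case $\varphi\equiv1$, and then to establish that case by induction on $k$. For the reduction, suppose I have already proved that $\bigl|\int_{a'}^{b'}e^{i\lambda f(x)}\dint x\bigr|\le c_k\lambda^{-1/k}$ for every subinterval $[a',b']\subseteq[a,b]$, with $c_k$ depending only on $k$. Writing $F(x)=\int_a^x e^{i\lambda f(t)}\dint t$, so that $F(a)=0$ and $\|F\|_\infty\le c_k\lambda^{-1/k}$, and assuming $\varphi\in C^1$, one integration by parts yields
\[
I(\lambda)=\int_a^b e^{i\lambda f}\varphi\dint x=F(b)\varphi(b)-\int_a^b F(x)\varphi'(x)\dint x,
\]
so that $|I(\lambda)|\le c_k\bigl(|\varphi(b)|+\|\varphi'\|_{L^1}\bigr)\lambda^{-1/k}$ and one may take $C_\varphi=c_k\bigl(|\varphi(b)|+\|\varphi'\|_{L^1}\bigr)$ (the boundary term vanishing if $\varphi$ is supported in the open interval).

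For the base case $k=1$ I would use that $f'\ge1$, so $f'$ is nonvanishing, together with the hypothesis that $f'$ is monotone. Writing $e^{i\lambda f}=(i\lambda f')^{-1}\frac{d}{dx}e^{i\lambda f}$ and integrating by parts over $[a',b']$,
\[
\int_{a'}^{b'}e^{i\lambda f}\dint x=\left[\frac{e^{i\lambda f}}{i\lambda f'}\right]_{a'}^{b'}-\frac1{i\lambda}\int_{a'}^{b'}e^{i\lambda f}\,\frac{d}{dx}\!\left(\frac1{f'}\right)\dint x,
\]
the boundary term is at most $2\lambda^{-1}$ since $0<1/f'\le1$, and for the remaining integral the monotonicity of $f'$ makes $1/f'$ monotone, so $\int_{a'}^{b'}\bigl|\frac{d}{dx}(1/f')\bigr|\dint x=\bigl|1/f'(b')-1/f'(a')\bigr|\le1$; hence $c_1=3$ works.

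For the inductive step, assuming the statement for $k-1\ge1$, I would use that $f^{(k)}\ge1$ forces $f^{(k-1)}$ to be strictly increasing, hence to have at most one zero $c$ in $[a',b']$. For a parameter $\delta>0$ to be chosen I excise the interval $(c-\delta,c+\delta)$ (or a one-sided $\delta$-neighborhood of the endpoint where $|f^{(k-1)}|$ is smallest, if $f^{(k-1)}$ does not vanish); on the remaining set, a union of at most two intervals, one has $|f^{(k-1)}|\ge\delta$, so on each such interval, after rescaling the phase by $\delta^{-1}$ and the parameter by $\delta$ (and conjugating if the sign must be corrected), the inductive hypothesis — whose monotonicity requirement is needed only when $k-1=1$ and is then supplied by $f''\ge1$ — gives a contribution $\le c_{k-1}(\delta\lambda)^{-1/(k-1)}$, while the excised piece contributes at most its length $2\delta$. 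Thus
\[
\left|\int_{a'}^{b'}e^{i\lambda f}\dint x\right|\le 2\delta+2c_{k-1}(\delta\lambda)^{-1/(k-1)},
\]
and taking $\delta=\lambda^{-1/k}$ balances the two terms and closes the induction with $c_k=2+2c_{k-1}$.

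I expect the main obstacle to be exactly the two non-mechanical points above: in the base case, recognizing that the monotonicity assumption on $f'$ is precisely what bounds the total variation of $1/f'$ (without it the error term produced by the integration by parts need not be controlled), and in the inductive step, correctly isolating the unique near-degeneracy of $f^{(k-1)}$ and then optimizing the cut-off scale $\delta$. Everything else is routine, and the resulting constant depends only on $k$, on $|\varphi(b)|$, and on $\|\varphi'\|_{L^1}$.
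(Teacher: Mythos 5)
Your proof is correct: it is the standard induction-on-$k$ argument, with the base case $k=1$ handled by integration by parts and the monotonicity of $f'$ used exactly where it must be (to bound the total variation of $1/f'$), the inductive step handled by excising a $\delta$-neighborhood of the unique point where $f^{(k-1)}$ is small and optimizing $\delta=\lambda^{-1/k}$, and the amplitude $\varphi$ removed at the end by one further integration by parts against $F(x)=\int_a^x e^{i\lambda f}$. The paper states this classical van der Corput lemma without proof, relying on the standard treatment in \cite{Ste93}, and your write-up is precisely that standard argument, so there is nothing to compare beyond noting the agreement.
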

	 The importance of van der Corput lemma lies in the fact that it not only provides the optimal decay rate but also yields uniform estimates. However one can not generally expect both optimal and uniform decay estimates for oscillatory integrals in higher dimensions. There is a higher-dimensional analogue of van der Corput lemma 
	 \begin{lemma}
	 	If $\frac{\partial^\alpha f}{\partial x^\alpha}\geq 1$ holds on the support of $\varphi$ for some multi index $\alpha=(\alpha_1,\cdots,\alpha_2)$, then
	 	\begin{equation}
	 		\abs{I(\lambda)}\leq C_\varphi\lambda^{-1/|\alpha|},
	 	\end{equation}
	 	where the constant also depends on higher derivatives of $f$.
	 \end{lemma}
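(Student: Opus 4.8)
\emph{Sketch of proof.} The plan is to reduce everything to the one–dimensional van der Corput lemma recalled above, by a stopping–time decomposition of $\supp\varphi$ organised as an induction on $k:=|\alpha|$. Pick any index $j$ with $\alpha_j\ge 1$ and, relabelling coordinates, assume $j=n$; write $x=(x',x_n)$ and $m:=\alpha_n$. If $\alpha=me_n$ (all derivatives on the last variable, so $m=k$) no decomposition is needed: for fixed $x'$ we have $\partial_{x_n}^{k}f(x',\cdot)\ge 1$, and the one–dimensional lemma in its standard amplitude form gives $\bigl|\int e^{i\lambda f(x',x_n)}\varphi\,dx_n\bigr|\le C\lambda^{-1/k}$ with $C$ controlled by $\varphi,\ \partial_{x_n}\varphi$ and finitely many $x_n$–derivatives of $f$ on $\supp\varphi$; integrating over the compact projection of $\supp\varphi$ to $x'$ closes this case. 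The base case $k=1$ is the same computation with the reduction suppressed: one integration by parts in $x_j$ against $1/(i\lambda\,\partial_{x_j}f)$ yields the decay $\lambda^{-1}=\lambda^{-1/|\alpha|}$, the constant now involving $\varphi$, $\nabla\varphi$ and the second–order derivatives of $f$ on $\supp\varphi$ — already an instance of the dependence on higher derivatives asserted in the statement. (The $k=1$ clause of the one–dimensional lemma, with its monotonicity hypothesis, is never invoked, since for first–order derivatives one integrates by parts directly.)

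For the inductive step assume $k\ge 2$ and $1\le m<k$, and set $g:=\partial_{x_n}^{m}f$, so that $\partial^{\alpha-me_n}g=\partial^{\alpha}f\ge 1$ on $\supp\varphi$ with $|\alpha-me_n|=k-m<k$. I would first record, by the \emph{same} induction (base case trivial; the one–variable input being the classical sublevel bound $|F^{(p)}|\ge 1\Rightarrow|\{|F|\le\varepsilon\}|\le c_p\varepsilon^{1/p}$, itself a consequence of the one–dimensional lemma), the estimate: if $\partial^{\beta}F\ge 1$ on $\supp\varphi$ with $|\beta|=p$, then $|\{x\in\supp\varphi:|F(x)|\le\varepsilon\}|\le C\varepsilon^{1/p}$. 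Apply this to $g$ with $p=k-m$. Fix $\delta>0$ and split $\supp\varphi$ into the flat set $\{|g|\le\delta\}$, whose contribution to $|I(\lambda)|$ is $\le\|\varphi\|_\infty C\,\delta^{1/(k-m)}$ by the trivial bound and this estimate, and the steep set $\{|g|>\delta\}$. On the steep set, freeze $x'$: the fibre $\{x_n:|g(x',x_n)|>\delta\}$ is a finite union of intervals, on each of which $|\partial_{x_n}^{m}(f/\delta)|\ge 1$; applying the one–dimensional lemma with phase $\lambda f=(\lambda\delta)(f/\delta)$ on each (for $m\ge 2$), or for $m=1$ integrating by parts once in $x_n$ and estimating the amplitude through the substitution $u=\partial_{x_n}f$ on each monotonicity interval of $\partial_{x_n}f$ (which replaces the crude bound $\|\partial_{x_n}^{2}f\|_{\infty}/\delta^{2}$ by $2/\delta$ for $\int|\partial_{x_n}^{2}f|/(\partial_{x_n}f)^{2}$), one obtains per fibre a bound $\le C_\varphi N(\lambda\delta)^{-1/m}$, where $N$ bounds the number of these intervals. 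Integrating in $x'$ and choosing $\delta=\lambda^{-(k-m)/k}$ balances the two pieces and gives $\abs{I(\lambda)}\le C_\varphi\lambda^{-1/k}=C_\varphi\lambda^{-1/|\alpha|}$.

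The analytic content of each individual step is slight; the step I expect to be the real obstacle, and the source of the dependence on more than $\partial^{\alpha}f$, is the \emph{uniformity} of this bookkeeping. Having frozen all but one variable and decomposed by the size of a derivative, one must sum the one–dimensional estimates over the fibre and over the steep intervals inside each fibre while keeping the constant bounded; this demands control of (i) the number $N$ of intervals on which $\partial_{x_n}^{m}f(x',\cdot)$ exceeds $\delta$ — equivalently, of the oscillation of that derivative along the fibre — and (ii) the $C^{m+1}$–norm of $f$ on $\supp\varphi$, and both quantities enter the final constant. For the applications in this paper the phase is real–analytic — indeed, after the toric resolution it is a monomial — so such counts are automatically finite and locally uniform; in that setting one can in fact dispense with the decomposition entirely, pulling the integral back through the resolution and applying the one–dimensional lemma on the monomial phase, which is the viewpoint developed below.
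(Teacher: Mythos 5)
Your argument takes a genuinely different route from the paper's. The paper follows Stein: since $\partial^\alpha_x f=\sum_j a_j(\xi_j\cdot\nabla)^{|\alpha|}f$ for a fixed finite collection of unit vectors $\xi_j$ with $a_j>0$, the hypothesis $\partial^\alpha f\geq 1$ forces, on each member of a finite cover of $\supp\varphi$, one particular $(\xi_j\cdot\nabla)^{|\alpha|}f$ to be bounded below; one then rotates so that $\xi_j$ is a coordinate axis and applies the one--dimensional lemma fibre by fibre. The decomposition there is fixed once and for all (its size depends on $f$ but not on $\lambda$), and each piece can be taken to be a ball, so every fibre is a single interval. Your proof instead runs a Carbery--Christ--Wright--style induction on $|\alpha|$: isolate $g=\partial_{x_n}^{m}f$, bound the flat set $\{|g|\leq\delta\}$ by a sublevel--set estimate, apply the one--dimensional lemma on the steep set, and optimise $\delta=\lambda^{-(k-m)/k}$. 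The arithmetic of the balance is right and gives $\lambda^{-1/k}$.

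However, there is a genuine gap in the steep part, and you have put your finger on it without resolving it: the set $\{x_n:|g(x',x_n)|>\delta\}$ can have unboundedly many (indeed infinitely many) connected components for a merely $C^\infty$ phase --- for instance $g(x_n)=e^{-1/x_n^2}\sin(1/x_n)$ has infinitely many sign changes near $0$ with all derivatives bounded --- and the quantity $N$ you introduce is therefore \emph{not} controlled by finitely many derivatives of $f$ on $\supp\varphi$. Replacing the hard cutoff by a smooth one $\eta(g/\delta)$ does not help either, since $\|\partial_{x_n}\eta(g/\delta)\|_{L^1}\sim\delta^{-1}$ and this kills the balance. Because your decomposition lives at the $\lambda$--dependent scale $\delta$, one cannot pre--choose a finite $\lambda$--independent cover as in Stein's argument, and so the bookkeeping does not close at the stated level of generality ($C^{|\alpha|}$ or $C^\infty$ phase). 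For polynomial or real--analytic $f$ the count $N$ is finite and locally uniform and the argument does go through; since the only use of this lemma in the paper is on monomial phases after the toric resolution, the gap is harmless for the paper's applications, but it does mean your proposal does not actually prove the lemma as stated, whereas the Stein route the paper outlines does.
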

	 It should be noted that the decay estimate above is, in general, neither optimal nor uniform. Optimal and uniform estimates are each important to analysts, albeit from different perspectives. For uniform estimates we refer the readers to \cite{CCW99} and \cite{BGZZ21}.
	 
	 If one focuses only on the optimal decay of an oscillatory integral, the problem is inherently local. In the local setting, more refined information, such as asymptotic estimates or even full asymptotic expansions, may be expected. The stationary phase principle shows that the behavior of an oscillatory integral is governed by the critical points of its phase: well-behaved critical points yield better decay, while poorly behaved ones lead to weaker decay. In the nondegenerate case, the sharp decay follows from the classical stationary phase method. For phase functions of algebraic type, such as polynomials or real-analytic functions, critical points correspond to singularities of hypersurfaces, motivating the use of algebraic-geometric techniques, in particular resolution of singularities, in analysis. 
	 
	 In the 1960s, Arnold proposed the following hypothesis: 
	
	 \emph{For oscillatory integrals with real-analytic phase functions, the principal term of the asymptotic expansion is determined by the Newton distance of the phases.}
	
	 Later, Varchenko confirmed the hypothesis for generic phases in the seminal work \cite{Var76}, using toric resolution of singularities. 
	 
	 For the reader's convenience, we briefly recall Varchenko's approach. One first rewrites $I(\lambda)$ in the form
	 \begin{align}
	 	I(\lambda)&=\int_{0}^{+\infty}e^{i\lambda t}J(t)\dint t+\int_{-\infty}^{0}e^{i\lambda t}J(t)\dint t\\
	 	&=\int_{0}^{+\infty}e^{i\lambda t}J(t)\dint t+\int_{0}^{+\infty}e^{-i\lambda t}J(-t)\dint t
	 \end{align}
	 where $J(t)$ is the Gelfand-Leray function
	 \begin{equation}
	 	J(t)=\int_{f=t}\varphi\cdot\omega, \quad t>0,
	 \end{equation}
     and $\omega$ is the Gelfand-Leray form determined by
     \begin{equation}
     	\omega\wedge \dint t=\dint x_1\cdots\dint x_n.
     \end{equation}
	Once the asymptotic expansion of $J(t)$ is known, the corresponding expansion for $I(\lambda)$ follows immediately. The function $J(t)$ is connected to the local zeta function 
	\begin{align}
		Z_f(s)&=\int_{\R^n}\abs{f(x)}^s\varphi(x)\dint x\\
		&=\int_{f\geq 0}(f(x))^s\varphi(x)\dint x+\int_{f<0}(-f(x))^s\varphi(x)\dint x\\
		&:=Z_f^+(s)+Z_f^-(s)
	\end{align}
	through inverse Mellin's transform as follows
	\begin{equation}
		J(t)=\int_{c-i\infty}^{c+i\infty}t^{-s-1}Z_f^+(s)ds,\quad J(-t)=\int_{c-i\infty}^{c+i\infty}t^{-s-1}Z_f^-(s)ds.
	\end{equation}
	This reduces the study of the study of $J(t)$ to the meromorphic continuation of $Z_f(s)$ to all of $\mathbb{C}$. This continuation was conjectured by Gelfand in 1954, and later proved independently by Bernstein-Gelfand \cite{BG1969} and by Atiyah \cite{Atiyah1970}. Bernstein \cite{Bernstein} subsequently gave another proof using what is now known as Bernstein-Sato polynomial, introduced in the context of $\mathcal{D}-$module theory. The proofs of both Bernstein-Gelfand and Atiyah rely on Hironaka's celebrated resolution theorem \cite{Hir64}. However, Hironaka's result is purely existential, whereas Arnold's hypothesis points directly to an explicitly computable quantity-the Newton distance of the phase function. This is precisely what Varchenko accomplished: for generic phase functions, he employed a special type of resolution to relate the principal term of the asymptotic expansion to the geometry of the Newton polyhedron. More explicitly, Varchenko obtained the asymptotic expansion
	\begin{equation}
		I(\lambda)\sim \sum_{k}\sum_{l=1}^{l_k}a_{k,l}\lambda^{-p_k}\ln^{l-1}\lambda,
	\end{equation}
	where $\{p_k\}$ forms an arithmetic progression with the leading exponent $p_1$ determined by the Newton distance of the phase.
	
	This leads to a somewhat awkward situation: in most applications, analysts are primarily interested in the principal term, yet obtaining it requires digesting Varchenko's full proof to access the stronger asymptotics. If one wishes to avoid algebraic geometry, it becomes necessary to develop analytic tools that replicate the effect of toric resolution. In this direction, Greenblatt made substantial contributions: by introducing a remarkable resolution algorithm in \cite{Gre07}, he obtained asymptotic expansions for a broad class of smooth phase functions; see also \cite{Gre09, Gre10, Gre10Res}. In a related development, Gilula \cite{Gil18} established analogous results using a purely real-variable approach. Both proofs rely on a delicate decomposition of the domain, applying van der Corput lemma locally and summing the contributions of each piece. Pursuing a purely analytic proof, however, comes at the cost of losing much of the geometric intuition that algebraic methods naturally provide. Further advances on the asymptotic behavior of oscillatory integrals and on the meromorphic continuation of local zeta functions for general smooth functions were achieved in \cite{CKN13,KN15,KN16,KN16New,KN20}. 
	
	In this work, we aim to strike an optimal balance: \emph{reducing analytic complexity while retaining the key geometric intuition}.
	
	Before presenting our main novelty and key observation, we revisit the aforementioned higher-dimensional analogue of van der Corput lemma. We briefly outline its proof, discuss its strengths and limitations, and then introduce the key improvements underlying our approach. Consider
	\begin{equation}
		I(\lambda)=\int_{\R^n}e^{i\lambda f(x)}\varphi(x)\dint x,
	\end{equation}
	where $\varphi$ is a smooth cut-off,  and assume that
	\begin{equation}
		\frac{\partial^\alpha f}{\partial x^\alpha}\geq 1.
	\end{equation}
	Following Stein\cite{Ste93}, there exists a finite collection of unit vectors $\{\xi_j\}$ such that 
	\begin{equation}
		\partial^\alpha_xf(x)=\sum_{j}a_j\left(\xi_j\cdot\nabla\right)^{\abs{\alpha}}f(x).
	\end{equation}
	Hence the lower bound on $\partial^\alpha_xf(x)$ implies that for some $\xi_j$ we have $\left(\xi_j\cdot\nabla\right)^{\abs{\alpha}}f(x)>c$ for an absolute constant $c>0$. By rotating coordinates so that this $\xi_j$ becomes a coordinate axis, one may apply the one-dimensional van der Corput lemma in that direction and obtain the desired decay.
	
	This argument has two clear advantages. First, the rotation is an invertible linear transformation, so the Jacobian plays no role. Second, the directional derivatives are collected into a single direction, allowing direct use of the one-dimensional lemma. Its limitation, however, stems from the same linearity: although we obtain a direction along which to apply van der Corput lemma, we have no precise control over this direction, and in general it does not yield the optimal decay. This reflects the broader fact that linear changes of variables do not reveal sufficient information about the singularities of the phase. By contrast, nonlinear transformations, such as blow-ups arising in algebraic geometry, capture precisely the singular structure that must be resolved. So our key observation is
	
	\emph{Applying a toric resolution of singularities to the phase function reveals explicit geometric directions adapted to its singularities, and these directions are exactly those along which one-dimensional van der Corput estimates produce the optimal decay rate of the oscillatory integral. }

\section{A Motivating Example and Generalizations}
	In this section we begin with a toy model that illustrates the central idea, and then extend the argument to the general case.
	 \begin{example}
	 	Consider the toy model $f(x_1,x_2)=x_1^2x_2$, applying van der Corput lemma along the direction of $\xi_j$ above yields
	 	\begin{equation}
	 		|I(\lambda)|\leq C_\varphi\lambda^{-\frac{1}{3}}.
	 	\end{equation}
	 	However, the phase is more degenerate in the $x_1-$direction. If we first integrate in $x_1$ and apply the van der Corput lemma to $x_1\to x_1^2x_2$, we obtain
	 	\begin{equation}
	 		\abs{\int_{\R}e^{i\lambda x_1^2x_2}\varphi(x_1,x_2)\dint x_1}\leq C_\varphi\abs{\lambda x_2}^{-\frac{1}{2}},
	 	\end{equation}
	 	and therefore
	 	\begin{equation}
	 		\abs{I(\lambda)}\leq C_\varphi \lambda^{-\frac{1}{2}}.
	 	\end{equation}
	 \end{example}
	Thus, choosing the direction that reflects the actual singular structure of the phase yields a strictly better decay than that obtained from the linear method.
	Now consider the monomial 
	\begin{equation}
		f(x)=x_1^{\alpha_1}\cdots x_n^{\alpha_n}, \quad \alpha\in\mathbb{Z}_{+}^n
	\end{equation}
	For such a phase, it is clear that the most singular behavior occurs along those coordinate directions $x_j$ for which $\alpha_j$ is largest; these are precisely the directions in which the phase degenerates most severely. We now state a lemma that provides the decay rate for oscillatory integrals with this monomial phase.
	\begin{lemma}
		Consider
		\begin{equation}
			I(\lambda)=\int_{\R^n}e^{i\lambda x_1^{\alpha_1}\cdots x_n^{\alpha_n}}\varphi(x_1,\cdots,x_n)\dint x_1\cdots\dint x_n,
		\end{equation}
		where $\alpha, \beta\in\mathbb{Z}_{+}^n$,  $\supp\varphi\subset [-1,1]^n$ and denote by 
		\begin{equation}
			d=\max_j\alpha_j,\quad  M=\#\{j:\alpha_j=d\},
		\end{equation}
		then it follows
		\begin{equation}
			\abs{I(\lambda)}\leq C_\varphi\lambda^{-\frac{1}{d}}\ln^{M-1}\lambda.
		\end{equation}
	\end{lemma}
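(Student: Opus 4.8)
The plan is to integrate the variables one at a time, starting from a direction in which $f$ is most degenerate, and to let the multiplicity $M$ of the top exponent $d$ be the sole source of the logarithmic loss. After permuting coordinates I may assume $\alpha_1=\cdots=\alpha_M=d>\alpha_{M+1}\ge\cdots\ge\alpha_n\ge 1$.

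First I would freeze $x'=(x_2,\dots,x_n)$ and integrate in $x_1$. Writing $c=c(x')=x_2^{\alpha_2}\cdots x_n^{\alpha_n}$, the phase is $\lambda c\,x_1^{d}$, and since $\partial_{x_1}^{d}x_1^{d}=d!$ is a nonzero constant, a rescaling of $x_1$ turns the inner integral into oscillation against the scale-invariant model phase $x_1\mapsto x_1^{d}$ with frozen amplitude $\varphi(\cdot,x')$. The one-dimensional van der Corput lemma (when $d=1$ the derivative of the linear model phase is constant, hence monotone), together with the trivial bound, then gives $|\int_{-1}^{1}e^{i\lambda c x_1^{d}}\varphi\,dx_1|\le C_\varphi\min((\lambda|c|)^{-1/d},1)$ with a constant uniform in $x'$ (it depends only on $d$, on $\sup|\varphi|$, $\sup|\partial_{x_1}\varphi|$, and on $\varphi$'s support). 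Integrating in $x'$ yields $|I(\lambda)|\le C_\varphi\int_{[-1,1]^{n-1}}\min((\lambda|x_2^{\alpha_2}\cdots x_n^{\alpha_n}|)^{-1/d},1)\,dx'$.

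Next I would eliminate the subcritical variables $x_{M+1},\dots,x_n$ using the elementary inequality: for $0<a<d$ and any $B\ge0$, $\int_{-1}^{1}\min(B|y|^{-a/d},1)\,dy\le C(a,d)\min(B,1)$ — one splits at $B=1$, and for $B<1$ notes that $\min=1$ on $|y|\le B^{d/a}$ while $\int_{|y|>B^{d/a}}B|y|^{-a/d}\,dy$ converges since $a/d<1$. Applying this once for each of $x_n,x_{n-1},\dots,x_{M+1}$ removes them at the cost of a fixed constant and leaves $|I(\lambda)|\le C_\varphi\int_{[-1,1]^{M-1}}\min(\lambda^{-1/d}(|x_2|\cdots|x_M|)^{-1},1)\,dx_2\cdots dx_M$ (if $M=1$ this already gives $|I(\lambda)|\le C_\varphi\lambda^{-1/d}$). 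Finally I would set $x_j=e^{-t_j}$, $T=t_2+\cdots+t_M$ and $L=\tfrac1d\ln\lambda$, and use that $t\mapsto T$ pushes Lebesgue measure on $[0,\infty)^{M-1}$ to the density $T^{M-2}/(M-2)!$; the integral becomes a constant multiple of $e^{-L}\int_0^{L}T^{M-2}\,dT+\int_{L}^{\infty}e^{-T}T^{M-2}\,dT$, i.e.\ $e^{-L}L^{M-1}/(M-1)!$ plus the incomplete Gamma value $\Gamma(M-1,L)\le C_M e^{-L}(1+L)^{M-2}$, whence the whole expression is $\le C_{d,M}\,\lambda^{-1/d}\ln^{M-1}\lambda$ for $\lambda$ large, as desired.

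The first two moves are routine; I expect the main obstacle to be the combinatorial bookkeeping in the last step — verifying that it is exactly the multiplicity $M$ of the maximal exponent (equivalently, the dimension of the face of the Newton polyhedron that meets the diagonal) that fixes the power of $\ln\lambda$, and that the subcritical exponents contribute nothing beyond constants. A secondary point to be careful about is the uniformity over the frozen variables $x'$ of the van der Corput constant in the first step; this is precisely what the scale-invariant model phase $x_1\mapsto x_1^{d}$ provides, and it is the feature that makes the monomial (normal-crossings) case transparent and serves as the local model for the general toric argument.
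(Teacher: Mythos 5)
Your proof is correct, and it reaches the bound by a route that differs from the paper's in two respects. The paper first restricts to the positive orthant and splits the domain into the sublevel region $\{x_1^{\alpha_1}\cdots x_n^{\alpha_n}<1/\lambda\}$ and the oscillatory region $\{x_1^{\alpha_1}\cdots x_n^{\alpha_n}\ge 1/\lambda\}$: the first piece is bounded by quoting the sublevel-set measure estimate (Lemma 3.1 of Greenblatt's paper \cite{Gre10}), and on the second piece one-dimensional van der Corput is applied in the $x_1$-variable (with $\alpha_1=d$) and the resulting bound $(\lambda x_2^{\alpha_2}\cdots x_n^{\alpha_n})^{-1/d}$ is integrated over a box, the $\ln^{M-1}\lambda$ coming from the $M-1$ remaining critical variables. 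You keep the same core mechanism --- van der Corput in the most degenerate direction, logs produced by the other variables with exponent $d$ --- but you avoid both the domain splitting and the external sublevel lemma: the trivial bound and the van der Corput bound are merged into the pointwise estimate $\min\bigl((\lambda|x_2^{\alpha_2}\cdots x_n^{\alpha_n}|)^{-1/d},1\bigr)$, the subcritical variables are removed by the elementary inequality $\int_{-1}^{1}\min(B|y|^{-a/d},1)\,dy\le C\min(B,1)$ for $a<d$, and the remaining $(M-1)$-fold integral is evaluated explicitly in exponential coordinates via the incomplete Gamma function. Your computations check out (the uniformity of the van der Corput constant in the frozen variables is correctly accounted for, and the density $T^{M-2}/(M-2)!$ and the bound $\Gamma(M-1,L)\le C_M e^{-L}(1+L)^{M-2}$ are right), so the argument is fully self-contained and makes visibly transparent that the multiplicity $M$ alone fixes the power of the logarithm; what the paper's version buys in exchange is brevity by citation and a decomposition (size region versus oscillation region) that is reused verbatim in the weighted generalization, Lemma \ref{KeyLemma}, where your exact Gamma-function evaluation would have to be redone with the weights $x^\beta$ present.
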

	\begin{proof}
		Without loss of generality, we may restrict the domain of integration to the first orthant $\R_{+}^n$ and set
		\begin{equation}
			I^{+}(\lambda)=\int_{\R_{+}^n}e^{i\lambda x_1^{\alpha_1}\cdots x_n^{\alpha_n}}\varphi(x_1,\cdots,x_n)\dint x_1\cdots\dint x_n.
		\end{equation}
		To obtain an estimate for $I(\lambda)$, it suffices to prove a similar estimate for $I^{+}(\lambda)$. To analyze $I^{+}(\lambda)$, we decompose the domain depending on whether the oscillation is effective or negligible.
		This yields a splitting
		\begin{align}
			I^{+}(\lambda)&=\int_{\left\{x\in\R_{+}^n:x_1^{\alpha_1}\cdots x_n^{\alpha_n}<1/\lambda\right\}}e^{i\lambda x_1^{\alpha_1}\cdots x_n^{\alpha_n}}\varphi(x_1,\cdots,x_n)\dint x_1\cdots\dint x_n\\
			&+\int_{\left\{x\in\R_{+}^n:x_1^{\alpha_1}\cdots x_n^{\alpha_n}\geq1/\lambda\right\}}e^{i\lambda x_1^{\alpha_1}\cdots x_n^{\alpha_n}}\varphi(x_1,\cdots,x_n)\dint x_1\cdots\dint x_n\\
			&:=I_1^{+}(\lambda)+I_2^{+}(\lambda),
		\end{align}
		where $I_1^{+}(\lambda)$ is the contribution from a sublevel region (handled by size estimates), and $I_2^{+}(\lambda)$ is the contribution from the oscillatory region (handled by van der Corput-type estimates).
		Therefore
		\begin{align}
			\abs{I_1^{+}(\lambda)}&\leq \norm{\varphi}_{L^\infty}\abs{\left\{x\in[0,1]^n:x_1^{\alpha_1}\cdots x_n^{\alpha_n}<1/\lambda\right\}} \\
			&\leq C_\varphi\lambda^{-\frac{1}{d}}\ln^{M-1}\lambda.
		\end{align}
			The last inequality follows from Lemma 3.1 of \cite{Gre10}. We now proceed to treat the term $I_2^+$. Without loss of generality, we may assume $\alpha_1=d$. Observe that
		\begin{equation}
			\left\{x\in[0,1]^n:x_1^{\alpha_1}x_2^{\alpha_2}\cdots x_n^{\alpha_n}\geq 1/\lambda\right\}\subset \left\{x\in[0,1]^n:x_j\geq \lambda^{-\frac{1}{\alpha_j}},j=1,\cdots,n\right\}
		\end{equation}
		and that
		\begin{equation}
			\partial_{x_1}^d(x_1^{\alpha_1}x_2^{\alpha_2}\cdots x_n^{\alpha_n})=d!x_2^{\alpha_2}\cdots x_n^{\alpha_n}.
		\end{equation}
		Applying van der Corput lemma in the $x_1-$variable yields
		\begin{align}
			\abs{I_2^+}&\leq C_\varphi\int_{\lambda^{-\frac{1}{\alpha_n}}}^1\cdots\int_{\lambda^{-\frac{2}{\alpha_2}}}^1\left(d!\lambda x_2^{\alpha_2}\cdots x_n^{\alpha_n}\right)^{-\frac{1}{d}}\dint x_2\cdots\dint x_n\\
			&\leq C_\varphi\lambda^{-\frac{1}{d}}\ln^{M-1}\lambda.
		\end{align}
	\end{proof}

	Now we state a more general lemma.
	\begin{lemma}\label{KeyLemma}
		Consider the oscillatory integral
		\begin{equation}
			I(\lambda)=\int_{\R^n}e^{i\lambda x_1^{\alpha_1}\cdots x_n^{\alpha_n}}\abs{x_1^{\beta_1}\cdots x_n^{\beta_n}}\varphi(x_1,\cdots,x_n)\dint x_1\cdots\dint x_n,
		\end{equation}
		where $\alpha, \beta\in\mathbb{Z}_{+}^n$ and $\supp\varphi\subset [-1,1]^n$. Set 
		\begin{equation}
			d=\max_j\frac{\alpha_j}{\beta_j+1},\quad M=\#\left\{j:\frac{\alpha_j}{\beta_j+1}=d\right\}.
		\end{equation}
		Then one has the decay estimate
		\begin{equation}
			\abs{I(\lambda)}\leq C_\varphi\lambda^{-\frac{1}{d}}\ln^{M-1}\lambda.
		\end{equation}
	\end{lemma}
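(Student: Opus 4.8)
The plan is to mirror the scheme of the preceding monomial lemma, splitting the domain into a region where the oscillation is negligible and one where it is effective, but now carrying the extra weight $|x^\beta|$ through both parts. First I would reduce to the first orthant: after the sign changes $x_j\mapsto-x_j$ the phase changes only by a sign, $|x^\beta|$ is untouched, and the cutoff stays smooth with support in $[-1,1]^n$; so, up to complex conjugation, it suffices to estimate $I^+(\lambda)=\int_{[0,1]^n}e^{i\lambda x^\alpha}x^\beta\varphi\,dx$. Then I split $I^+=I_1^++I_2^+$ according to whether $x^\alpha<1/\lambda$ or $x^\alpha\ge1/\lambda$.

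For $I_1^+$ I bound $|e^{i\lambda x^\alpha}|\le1$, which reduces matters to the weighted sublevel volume $\int_{\{x\in[0,1]^n:\,x^\alpha<1/\lambda\}}x^\beta\,dx$. The substitution $u_j=x_j^{\beta_j+1}$ turns $x_j^{\beta_j}\,dx_j$ into $(\beta_j+1)^{-1}du_j$ and $x_j^{\alpha_j}$ into $u_j^{\alpha_j/(\beta_j+1)}$, so this is a constant times $\bigl|\{u\in[0,1]^n:\prod_ju_j^{\alpha_j/(\beta_j+1)}<1/\lambda\}\bigr|$; since $\max_j\alpha_j/(\beta_j+1)=d$ with the maximum attained $M$ times (and, after clearing denominators, this is the integer-exponent case), Lemma 3.1 of \cite{Gre10} gives $|I_1^+|\le C_\varphi\lambda^{-1/d}\ln^{M-1}\lambda$.

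For $I_2^+$ I reorder coordinates so that $\alpha_1/(\beta_1+1)=d$ and integrate in $x_1$ first. For fixed $x'=(x_2,\dots,x_n)$ the variable $x_1$ runs over $[\mu(x')^{-1/\alpha_1},1]$, where $\mu(x')=\lambda\prod_{j\ge2}x_j^{\alpha_j}$; this interval is nonempty only for $x'$ in $R'=\{x'\in[0,1]^{n-1}:\prod_{j\ge2}x_j^{\alpha_j}\ge1/\lambda\}$, on which $\mu(x')\ge1$ and each $x_j\ge\lambda^{-1/\alpha_j}$. On this interval $x_1$ is bounded away from $0$, so I integrate by parts in $x_1$ a fixed number $k=k(d)$ of times using $e^{i\mu x_1^{\alpha_1}}=(i\mu\alpha_1x_1^{\alpha_1-1})^{-1}\partial_{x_1}e^{i\mu x_1^{\alpha_1}}$. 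The key point is that $\varphi$, being smooth and supported in $[-1,1]^n$, vanishes to infinite order on the face $x_1=1$, so every boundary term there is zero; a direct count of powers of $\mu$ then shows that each boundary term at $x_1=\mu(x')^{-1/\alpha_1}$ is bounded by $C_\varphi\,\mu(x')^{-1/d}$, and that for $k$ large (depending only on $d$) the remaining integral is $\le C_\varphi\,\mu(x')^{-1/d}$ as well. Hence the inner integral is $\le C_\varphi\,\mu(x')^{-1/d}=C_\varphi\,\lambda^{-1/d}\prod_{j\ge2}x_j^{-\alpha_j/d}$.

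Inserting this bound together with the outer weight $\prod_{j\ge2}x_j^{\beta_j}$, and using $\beta_j-\alpha_j/d\ge-1$ with equality precisely at the remaining $M-1$ maximizing indices, I obtain $|I_2^+|\le C_\varphi\,\lambda^{-1/d}\int_{R'}\prod_{j\ge2}x_j^{\beta_j-\alpha_j/d}\,dx'$; enlarging the region to $[0,1]^{n-1}$ in the non-maximizing variables gives $O(1)$ factors, while each maximizing variable is integrated over $[\lambda^{-1/\alpha_j},1]$ against $x_j^{-1}$, contributing $O(\ln\lambda)$, so $|I_2^+|\le C_\varphi\,\lambda^{-1/d}\ln^{M-1}\lambda$. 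Summing the $2^n$ orthant contributions finishes the proof. The step I expect to be the main obstacle is the per-slice estimate for $I_2^+$: one must integrate by parts more than once when $d<1$, and without the infinite-order vanishing of $\varphi$ at the face $x_1=1$ the boundary terms there would be of size $\mu(x')^{-1}$, which is too large; everything else is bookkeeping that parallels the unweighted lemma.
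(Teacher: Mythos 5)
Your proof is correct and follows the paper's overall strategy: reduce to the first orthant, split $I^+ = I_1^+ + I_2^+$ at the scale $x^\alpha = 1/\lambda$, handle $I_1^+$ by the substitution $u_j = x_j^{\beta_j+1}$ and Greenblatt's sublevel-set Lemma~3.1, and handle $I_2^+$ by integrating by parts in the variable realizing the maximum ratio $\alpha_j/(\beta_j+1)$. Where you diverge from the paper is in the organization of the $I_2^+$ step. The paper treats $d\ge1$ and $d<1$ separately: for $d\ge1$ a single integration by parts (plus van der Corput on the remainder when $d>1$) suffices; for $d<1$ it first performs $K$ integrations by parts over all of $\R_+$, using the high vanishing order of the weight $x_1^{\beta_1}$ at $x_1=0$ to kill boundary terms, thereby reducing to the $d\ge1$ case (with a side remark handling the borderline $1/d\in\Z$). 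You instead work directly on the slice $[L(x'),1]$, integrate by parts $k>1/d$ times, kill the $x_1=1$ endpoint via the infinite-order vanishing of the compactly supported $\varphi$ there, and verify by a power count that every boundary term at $x_1=L$ and the final remainder are all $O(\mu^{-1/d})$. This treats both ranges of $d$ in one pass and avoids the $1/d\in\Z$ caveat, at the cost of a slightly longer bookkeeping of derivatives landing on $\varphi$. Two smaller points: (i) your outer integral correctly produces the $\ln^{M-1}\lambda$ factor, which the paper's final display for $I_2^+$ omits (it writes $|I_2^+|\le C_\varphi\lambda^{-1/d}$, apparently a typo); (ii) the ``clearing denominators'' step for Greenblatt's lemma deserves one explicit line: writing $\alpha_j/(\beta_j+1)=p_j/q$ with common denominator $q$, the condition $\prod_j u_j^{p_j/q}<1/\lambda$ is equivalent to $\prod_j u_j^{p_j}<\lambda^{-q}$, and the sublevel estimate gives $(\lambda^{-q})^{1/\max_j p_j}\ln^{M-1}(\lambda^q)=C\lambda^{-1/d}\ln^{M-1}\lambda$ with the same $M$, as needed.
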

	\begin{proof}
		We first treat the case $d\geq 1$. The case $d<1$ can be reduced to this one by integration-by-parts argument. Similar to the preceding lemma, the estimate of $I(\lambda)$ reduces to estimating $I^{+}(\lambda)$ and $I^{-}(\lambda)$. We write
		\begin{align}
			I_1^{+}(\lambda)&=\int_{\left\{x\in\R_{+}^n:x_1^{\alpha_1}\cdots x_n^{\alpha_n}<1/\lambda\right\}}e^{i\lambda x_1^{\alpha_1}\cdots x_n^{\alpha_n}}x_1^{\beta_1}\cdots x_n^{\beta_n}\varphi(x_1,\cdots,x_n)\dint x_1\cdots\dint x_n,\\
			I_2^{+}(\lambda)&=\int_{\left\{x\in\R_{+}^n:x_1^{\alpha_1}\cdots x_n^{\alpha_n}\geq1/\lambda\right\}}e^{i\lambda x_1^{\alpha_1}\cdots x_n^{\alpha_n}}x_1^{\beta_1}\cdots x_n^{\beta_n}\varphi(x_1,\cdots,x_n)\dint x_1\cdots\dint x_n.\\
		\end{align}
		To estimate $I_1^{+}(\lambda)$, we perform the change of variables
		\begin{equation}
			x_j=y_j^{\frac{1}{\beta_j+1}}, \quad j=1,\cdots,n.
		\end{equation}
		Then
		\begin{equation}
			\abs{I_1^{+}(\lambda)}\leq C_\varphi \int_{\left\{x\in[0,1]^n:y_1^{\frac{\alpha_1}{\beta_1+1}}\cdots y_n^{\frac{\alpha_n}{\beta_n+1}}<1/\lambda\right\}}\dint y_1\cdots\dint y_n,
		\end{equation}
		and by Lemma 3.1 in \cite{Gre10}, we obtain
		\begin{equation}
			\abs{I_1^{+}(\lambda)}\leq C_\varphi\lambda^{-\frac{1}{d}}\ln^{M-1}\lambda.
		\end{equation}
		We now deal with $I_2^+(\lambda)$. Without loss of generality we assume that $\frac{\alpha_1}{\beta_1+1}=d$. Observe that 
		\begin{align}
			I_2^{+}(\lambda)&=\int_{\left\{x\in\R_{+}^n:x_1^{\alpha_1}\cdots x_n^{\alpha_n}\geq1/\lambda\right\}}e^{i\lambda x_1^{\alpha_1}\cdots x_n^{\alpha_n}}x_1^{\beta_1}\cdots x_n^{\beta_n}\varphi(x_1,\cdots,x_n)\dint x_1\cdots\dint x_n\\
			&=\int_{D(x_2,\cdots,x_n)}\left[\int_{L(x_2,\cdots,x_n)}^{1}e^{i\lambda x_1^{\alpha_1}\cdots x_n^{\alpha_n}}x_1^{\beta_1}\cdots x_n^{\beta_n}\varphi(x_1,\cdots,x_n)\dint x_1\right]\dint x_2\cdots\dint x_n\\
			&\leq \int_{\lambda^{-\frac{1}{\alpha_n}}}^1\cdots\int_{\lambda^{-\frac{2}{\alpha_2}}}^1\abs{\int_{L(x_2,\cdots,x_n)}^{1}e^{i\lambda x_1^{\alpha_1}\cdots x_n^{\alpha_n}}x_1^{\beta_1}\cdots x_n^{\beta_n}\varphi(x_1,\cdots,x_n)\dint x_1}\dint x_2\cdots\dint x_n,
		\end{align}
		where $L(x_2,\cdots,x_n)=\left(\lambda x_2^{\alpha_2}\cdots x_n^{\alpha_n}\right)^{-\frac{1}{\alpha_1}}$ and the outer domain $D(x_2,\cdots,x_n)$ is contained in the box $[\lambda^{-\frac{1}{\alpha_2}}, 1]\times \cdots\times [\lambda^{-\frac{1}{\alpha_n}}, 1]$.

		By integration by parts, we can see that to estimate the inner integral, it suffices to estimate
		\begin{equation}
			\int_{L(x_2,\cdots,x_n)}^{1}e^{i\lambda x_1^{\alpha_1}\cdots x_n^{\alpha_n}}x_1^{\beta_1}\dint x_1.
		\end{equation}
		 A direct computation yields
		\begin{align}
			&\int_{L(x_2,\cdots,x_n)}^{1}e^{i\lambda x_1^{\alpha_1}\cdots x_n^{\alpha_n}}x_1^{\beta_1}\dint x_1\\
			& =\left.\frac{e^{i\lambda x_1^{\alpha_1}\cdots x_n^{\alpha_n}}x_1^{\beta_1}}{i\alpha_1\lambda x_1^{\alpha_1-1}\cdots x_n^{\alpha_n}}\right|_{L(x_2 ,\cdots,x_n)}^1- \frac{\beta_1+1-\alpha_1}{i\alpha_1\lambda x_2^{\alpha_2}\cdots x_n^{\alpha_n}}\int_{L(x_2,\cdots,x_n)}^{1}e^{i\lambda x_1^{\alpha_1}\cdots x_n^{\alpha_n}}x_1^{\beta_1-\alpha_1}\dint x_1.
		\end{align}
		Using the van der Corput lemma for the last integral (we note that when $d=1$, the coefficient $\beta_1+1-\alpha_1=0$, and the last integral does not appear), we obtain
		\begin{equation}
			\abs{\int_{L(x_2,\cdots,x_n)}^{1}e^{i\lambda x_1^{\alpha_1}\cdots x_n^{\alpha_n}}x_1^{\beta_1}\dint x_1}\leq C\lambda^{-1}L(x_2,\cdots,x_n)^{\beta_1+1-\alpha_1}\left(x_2^{\alpha_2}\cdots x_n^{\alpha_n}\right)^{-1}.
		\end{equation} 
		Hence
		\begin{align}
			&\abs{\int_{L(x_2,\cdots,x_n)}^{1}e^{i\lambda x_1^{\alpha_1}\cdots x_n^{\alpha_n}}x_1^{\beta_1}x_2^{\beta_2}\cdots x_n^{\beta_n}\varphi(x_1,\cdots,x_n)\dint x_1}\\
			&\leq C_\varphi\lambda^{-1}L(x_2,\cdots,x_n)^{\beta_1+1-\alpha_1}x_2^{\beta_2-\alpha_2}\cdots x_n^{\beta_n-\alpha_n}\\
			&=C_\varphi\lambda^{-1}\left(\lambda x_2^{\alpha_2}\cdots x_n^{\alpha_n}\right)^{-\frac{\beta_1+1-\alpha_1}{\alpha_1}}x_2^{\beta_2-\alpha_2}\cdots x_n^{\beta_n-\alpha_n}\\
			&=C_\varphi\lambda^{-1/d}x_2^{\beta_2-\alpha_2/d}\cdots x_n^{\beta_n-\alpha_n/d}.
		\end{align}
		Consequently,
		\begin{align}
			I_2^+(\lambda)&\leq C_\varphi\lambda^{-1/d} \int_{\lambda^{-\frac{1}{\alpha_n}}}^1\cdots\int_{\lambda^{-\frac{2}{\alpha_2}}}^1x_2^{\beta_2-\alpha_2/d}\cdots x_n^{\beta_n-\alpha_n/d}\dint x_2\cdots\dint x_n\\
			&\leq C_\varphi\lambda^{-\frac{1}{d}}.
		\end{align}
		This completes the proof for the case $d\geq 1$. 
		
		We now consider the case $d<1$. The key point is that the higher order of vanishing of the weight $x^\beta$ near the origin allows us to perform repeated integration by parts without incurring any loss. Without loss of generality we assume that
		\begin{equation}
			\frac{\alpha_1}{\beta_1+1}=d.
		\end{equation}
		Choosing an integer $K>0$ such that
		\begin{equation}
			\beta_1+1-K\alpha_1\geq 0 \quad 
			\text{and}\quad \beta_1+1-(K+1)\alpha_1\leq 0.
		\end{equation}
		\begin{align}
		I^{+}(\lambda)&=\int_{\R_{+}^n}e^{i\lambda x_1^{\alpha_1}\cdots x_n^{\alpha_n}}x_1^{\beta_1}\cdots x_n^{\beta_n}\varphi(x_1,\cdots,x_n)\dint x_1\cdots\dint x_n\\
		&=\int_{\R_{+}^{n-1}}\left[\int_{\R_{+}}e^{i\lambda x_1^{\alpha_1}\cdots x_n^{\alpha_n}}\varphi(x_1,\cdots,x_n)x_1^{\beta_1}\dint x_1\right]x_2^{\beta_2}\cdots x_n^{\beta_n}\dint x_2\cdots\dint x_n.
	\end{align}
	Applying $K$ integrations by parts the resulting integrand still has nonnegative order at the boundary, while one additional integration by parts would produce a negative power of $x_1$.  More precisely, we obtain
	\begin{equation}
		\frac{C}{\left(i\lambda x_2^{\alpha_2}\cdots x_n^{\alpha_n}\right)^K }\int_{\R_{+}}e^{i\lambda x_1^{\alpha_1}\cdots x_n^{\alpha_n}}\varphi(x_1,\cdots,x_n)x_1^{\beta_1-K\alpha_1}\dint x_1.
	\end{equation}
	We also note that if $1/d$ is an integer, this final integration by parts is unnecessary; in that case, a simple change of variables suffices to obtain the desired estimate. Consequently, the integral reduces to
	\begin{equation}
	\frac{C}{\lambda^{K}}\int_{\R_{+}^{n}}\int_{\R_{+}}e^{i\lambda x_1^{\alpha_1}\cdots x_n^{\alpha_n}}\varphi(x_1,\cdots,x_n)x_1^{\beta_1-K\alpha_1}\cdots x_n^{\beta_n-K\alpha_n}\dint x_1\dint x_2\cdots\dint x_n.
	\end{equation}
	By construction, we then have
	\begin{equation}
		\frac{\alpha_1}{\beta_1+1-K\alpha_1}=\max_j \frac{\alpha_j}{\beta_j+1-K\alpha_j}\geq 1.
	\end{equation}
	Thus, applying the same argument as in the case $d\geq 1$, we obtain the desired upper bound.

	\end{proof}
	
	\section{Notations and the Main Theorem}
	In this section we introduce several notions about real-analytic functions that will be used throughout the paper and state the main result to be proved.
	
	A function $f:\R^n\to \R$ is said to be real-analytic near the origin if, in some neighborhood of 0, it admits a convergent Taylor expansion 
	\begin{equation}
		f(x)=\sum_{\alpha\in \mathbb{N}^n} a_\alpha x^\alpha, \quad \alpha=(\alpha_1,\cdots,\alpha_n).
	\end{equation}
	
	The \emph{Newton polyhedron} of $f$ is defined as
	\begin{equation}
		\mathcal{N}_f:=\text{convex hull of}\left\{\alpha+\R_+^n: a_\alpha\neq 0\right\}.
	\end{equation}
		The associated \emph{Newton distance} is defined by
	\begin{equation}
		d_f:=inf\left\{t: (t,\cdots,t)\in \mathcal{N}(f)\right\}.
	\end{equation}	
	
	The compact face of $\mathcal{N}_f$ that contains the point $(d_f,\cdots,d_f)$ and has minimal possible dimension is called the \emph{principal face} of $f$.
	
	For any compact face $\gamma\subset\mathcal{N}_f$, the corresponding \emph{$\gamma-$part} of $f$ is 
	\begin{equation}
		f_\gamma=\sum_{\alpha\in \gamma}a_\alpha x^\alpha.
	\end{equation}
	
	A compact face $\gamma$ is said to be \emph{$\R-$nondegenerate} if 
	\begin{equation}
		\nabla f_\gamma\neq 0 \quad\text{for all } x\in (\R\backslash 0)^n,
	\end{equation}
	 that is, the gradient of $f_\gamma$ does not vanish  outside the coordinate hyperplanes.
	We say $f$ is $\R-$nondegenerate if all compact faces of its Newton polyhedron are $\R-$nondegenerate.

	Now we are ready to state the main theorem.
	\begin{theorem}
		Let
		\begin{equation}
			I(\lambda)=\int_{\R^n}e^{i\lambda f(x)}\varphi(x)\dint x,
		\end{equation}
		where $f$ is real-analytic and $\R-$nondegenerate near the origin, satisfying $f(0)=0, \nabla f(0)=0$. Assume that $\varphi$ is supported in a sufficiently small neighborhood of $0$. If the codimension of principal face is $k$, then
		\begin{equation}
			\abs{I(\lambda)}\leq C_\varphi \lambda^{-\frac{1}{d_f}}\ln^{k-1}\lambda,
		\end{equation}
		whenever $1/d_f$ is not an integer, and otherwise
		\begin{equation}
			\abs{I(\lambda)}\leq C_\varphi \lambda^{-\frac{1}{d_f}}\ln^{k}\lambda.
		\end{equation}
	\end{theorem}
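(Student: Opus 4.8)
The plan is to follow the key observation stated in the introduction: use a toric resolution of singularities adapted to the Newton polyhedron of $f$ to reduce the oscillatory integral to a finite sum of integrals whose phases are monomials and whose amplitudes carry monomial weights, so that Lemma~\ref{KeyLemma} applies directly on each piece. Concretely, I would first localize and pass to the first orthant (the other orthants being handled symmetrically after reflections), then choose a smooth fan $\Sigma$ refining the normal fan of $\mathcal{N}_f$. Each maximal cone $\sigma$ of $\Sigma$ is simplicial and unimodular, generated by primitive vectors $v_1,\dots,v_n$ forming a $\mathbb{Z}$-basis; the associated monomial map $\pi_\sigma:\R_+^n\to\R_+^n$, $x_i\mapsto \prod_j y_j^{(v_j)_i}$, is a birational morphism with monomial Jacobian $\prod_j y_j^{\langle v_j,\mathbf{1}\rangle-1}$. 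Pulling $f$ back along $\pi_\sigma$ yields $f\circ\pi_\sigma(y)=y^{a(\sigma)}\,U_\sigma(y)$, where $a(\sigma)_j=\min_{\alpha\in\mathcal{N}_f}\langle v_j,\alpha\rangle$ records the contact of the cone with the polyhedron, and $U_\sigma$ is real-analytic and --- this is where $\R$-nondegeneracy enters --- nonvanishing on the region of integration after a further partition of unity, because the exceptional divisors meet only the compact faces and those faces have nonvanishing gradient off the coordinate hyperplanes. After a final smooth change of variables absorbing $U_\sigma^{1/?}$ (or, where $U_\sigma$ cannot be extracted as a perfect monomial factor, after a Taylor-type argument keeping $U_\sigma$ as a harmless smooth unit inside the amplitude), each contribution takes exactly the form treated in Lemma~\ref{KeyLemma}: phase $y_1^{\alpha_1}\cdots y_n^{\alpha_n}$ with $\alpha_j=a(\sigma)_j$ and weight $|y_1^{\beta_1}\cdots y_n^{\beta_n}|$ with $\beta_j=\langle v_j,\mathbf{1}\rangle-1$ coming from the Jacobian.

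The second step is the bookkeeping that identifies the worst cone. For each maximal cone $\sigma$ one gets the exponent $d(\sigma)=\max_j \frac{a(\sigma)_j}{\beta_j+1}=\max_j\frac{\langle v_j,\alpha^*(\sigma)\rangle}{\langle v_j,\mathbf{1}\rangle}$ where $\alpha^*$ is the vertex/face achieving the minimum, and Lemma~\ref{KeyLemma} gives $|I_\sigma(\lambda)|\le C_\varphi\lambda^{-1/d(\sigma)}\ln^{M(\sigma)-1}\lambda$. The classical fact --- going back to Varchenko's duality between the Newton polyhedron and its normal fan --- is that $\max_\sigma \frac{1}{d(\sigma)}=\frac{1}{d_f}$, with the maximum attained precisely by those cones $\sigma$ whose relative interior is contained in the normal cone of the principal face; among all such cones one can choose $\sigma$ so that the ray $\R_+\mathbf{1}$ (or rather the relevant face) forces the number of indices $j$ achieving the maximum in $d(\sigma)$ to equal $k=\operatorname{codim}$ of the principal face, giving the $\ln^{k-1}\lambda$ factor. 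I would prove the inequality $1/d(\sigma)\le 1/d_f$ by the support-function characterization $d_f=\min\{t:t\mathbf{1}\in\mathcal{N}_f\}=\min_{v\neq0}\frac{\langle v,\mathbf{1}\rangle}{\min_{\alpha\in\mathcal{N}_f}\langle v,\alpha\rangle}$ applied to $v=v_j$, and equality for the principal-face cones by a direct dimension count. The borderline case where $1/d_f$ is an integer needs the extra care already visible in the proof of Lemma~\ref{KeyLemma}: an additional integration by parts (or the failure of a change of variables to be smooth) costs one extra logarithm, accounting for $\ln^k\lambda$ instead of $\ln^{k-1}\lambda$.

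The main obstacle, and the step I expect to spend the most effort justifying, is the extraction of the unit $U_\sigma$ and the reduction to a genuinely monomial phase with a smooth amplitude. Lemma~\ref{KeyLemma} as stated handles a pure monomial phase and a pure monomial weight times a cutoff; after resolution one instead has phase $y^{a(\sigma)}U_\sigma(y)$ with $U_\sigma$ smooth and bounded away from $0$ on $\operatorname{supp}$ of the new cutoff. Handling this requires either (i) absorbing $U_\sigma$ by the change of variables $y_j\mapsto y_j U_\sigma(y)^{c_j}$ for a suitable choice of exponents $c_j$ with $\sum_j a(\sigma)_j c_j=1$, which is legitimate only when some $a(\sigma)_j\neq0$ and one checks the map is a local diffeomorphism with the Jacobian contributing only lower-order monomial weights, or (ii) re-running the van der Corput argument of Lemma~\ref{KeyLemma} with $x_1^{\alpha_1}$ replaced by $x_1^{\alpha_1}U(x)$, verifying that $\partial_{x_1}^d$ of this quantity is still bounded below by $c\,x_2^{\alpha_2}\cdots x_n^{\alpha_n}$ on the oscillatory region and that the integration-by-parts boundary terms behave identically. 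I would carry out (ii) by noting $U_\sigma\geq c>0$ and $\partial_{x_1}^{d}(x_1^{\alpha_1}U)=x_1^{\alpha_1-d}(c_{d}U+\text{lower-order in }x_1\text{ times }\partial U)$, so on $x_1\gtrsim\lambda^{-1/\alpha_1}$ the leading term dominates for $\varphi$ supported small enough; the rest of the estimate is then verbatim Lemma~\ref{KeyLemma}. A secondary technical point is ensuring the fan can be chosen smooth (standard: toric resolution / continued-fraction subdivision) and that only finitely many cones contribute, both of which are classical and which I would cite rather than reprove.
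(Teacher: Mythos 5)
Your overall strategy---toric resolution adapted to $\mathcal{N}_f$, monomial Jacobian, pullback $f\circ\pi_\sigma(y)=y^{a(\sigma)}U_\sigma(y)$, then Lemma~\ref{KeyLemma} on each chart with the exponent bookkeeping $l(\alpha^j)/\abs{\alpha^j}\le d_f$---is exactly the paper's. But there is a genuine gap at the step where you claim that $\R$-nondegeneracy makes $U_\sigma$ nonvanishing on the region of integration after a further partition of unity. That is false, and it is precisely the point where the nondegeneracy hypothesis is actually needed. Nondegeneracy says the gradient of each face part is nonzero off the coordinate hyperplanes; it does not prevent the face part itself from vanishing. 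For instance $f=x_1^2-x_2^2$ is nondegenerate, yet in the chart $x_1=y_1y_2$, $x_2=y_2$ one gets $f\circ\pi=y_2^2(y_1^2-1)$, and the factor $y_1^2-1$ vanishes at $y_1=\pm1$ on the exceptional divisor $\{y_2=0\}$; no partition of unity can make it bounded below near such points. Your fallback option (ii) likewise assumes $U_\sigma\ge c>0$, so the zero set of $U_\sigma$ is simply never treated in your argument.

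What the paper does instead: on each chart it writes $1=\sum_j\psi_j+\sum_l\eta_l$, where the $\psi_j$ are supported where $\bar{f_\sigma}$ has a definite sign (there your absorption-into-a-variable argument is fine and yields $C\lambda^{-1/d_f}\ln^{k-1}\lambda$), while the $\eta_l$ cover the zeros of $\bar{f_\sigma}$. On an $\eta_l$ piece nondegeneracy supplies $\partial_{y_{s+1}}\bar{f_\sigma}\neq0$, so one takes $u_{s+1}=\bar{f_\sigma}+y^ph$ itself as a new coordinate; the phase becomes $u_1^{l(\alpha^1)}\cdots u_{s+1}\cdots u_n^{l(\alpha^n)}$ with the new variable entering linearly, and Lemma~\ref{KeyLemma} (preceded by $N$ integrations by parts in $u_{s+1}$ when $d_f\le1$) gives the bound with $\ln^{k}\lambda$. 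These zero-set pieces are also the true source of the dichotomy in the statement ($\ln^{k-1}\lambda$ when $1/d_f$ is not an integer versus $\ln^{k}\lambda$ otherwise); your attribution of the extra logarithm to the internal integration by parts inside Lemma~\ref{KeyLemma} does not match this mechanism and would not arise at all if every unit really were bounded below. To repair the proposal you must add this second case along the paper's lines.
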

	\begin{remark}
		If $d_f=1$, then under certain conditions, Varchenko \cite{Var76} showed that the exponent of the logarithmic factor in the leading term is at most $k-1$, in agreement with Gilula's upper bound \cite{Gil18}. In contrast, our theorem above yields a slightly weaker estimate in this case, allowing for a logarithmic exponent up to $k$.
	\end{remark}
	If we carry out the argument developed in this paper, we can also recover the main theorem of Gilula (2018) up to a logarithmic factor.
	\begin{theorem}[Gilula 2018 \cite{Gil18}]
		Consider the damped oscillatory integral
		\begin{equation}
			I(\lambda)=\int_{\R^n}e^{i\lambda f(x)}x^\beta\varphi(x)\dint x.
		\end{equation}
		Then
		\begin{equation}
			\abs{I(\lambda)}\leq C_\varphi \lambda^{-\lfloor\beta+\bf{1}\rfloor}\ln^{d_\beta-1}\lambda,
		\end{equation}
		where $\lfloor\beta+\bf{1}\rfloor$ is the maximal number $c$ such that $\frac{\beta+\bf{1}}{c}$ is contained in $\mathcal{N}_f$ and $d_\beta$ is the greatest codimension over all faces of $\mathcal{N}_f$ containing $\frac{\beta+\bf{1}}{\lfloor\beta+\bf{1}\rfloor}$.
	\end{theorem}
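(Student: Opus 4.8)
The plan is to run the toric machinery underlying the main theorem, now carrying along the extra monomial weight $\abs{x^\beta}$. After localizing to a small neighbourhood of the origin and decomposing into the $2^n$ coordinate orthants — the general one being reduced to $\R_{+}^n$ by sign changes $x_j\mapsto -x_j$ — one fixes a smooth fan $\Sigma$ in $\R_{+}^n$ refining the inner normal fan of $\mathcal{N}_f$, together with the associated proper birational morphism $\pi\colon X_\Sigma\to\R^n$. A partition of unity subordinate to the affine charts $U_\sigma\cong\R^n$, $\sigma$ a maximal cone of $\Sigma$, localizes the integral near the exceptional divisors; on the complement $\pi$ is a diffeomorphism and the estimate follows by non-stationary phase or by an induction on the dimension, exactly as in the undamped situation. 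Everything thus reduces to estimating the chart integrals obtained by pulling $e^{i\lambda f}\abs{x^\beta}\varphi\,dx$ back through the monomial maps $\pi_\sigma$.

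In the chart attached to $\sigma=\mathrm{cone}(v_1,\dots,v_n)$, with $v_1,\dots,v_n$ the primitive generators (a $\Z$-basis of $\Z^n$ by smoothness), the map reads $x_i=\prod_j y_j^{(v_j)_i}$, and three elementary computations give: the phase $f\circ\pi_\sigma(y)=y^{a(\sigma)}u_\sigma(y)$, where $a(\sigma)_j=\min_{\alpha\in\mathcal{N}_f}\langle v_j,\alpha\rangle$ is the value of the support function of $\mathcal{N}_f$ at $v_j$ and $u_\sigma$ is real-analytic and — because $\sigma$ lies in the normal cone of a single face of $\mathcal{N}_f$, that face is $\R$-nondegenerate, and $\varphi$ is supported close enough to $0$ — nowhere vanishing on the pertinent set; the Jacobian of $\pi_\sigma$ equal to $\pm\prod_j y_j^{\langle\mathbf 1,v_j\rangle-1}$; and the pullback of $x^\beta$ equal to $\prod_j y_j^{\langle\beta,v_j\rangle}$. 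Writing $b(\sigma)_j+1=\langle\beta+\mathbf 1,v_j\rangle$ for the combined weight and performing the substitution $w=y_{j_0}\,u_\sigma(y)^{1/a(\sigma)_{j_0}}$ in the coordinate $j_0$ attaining $d(\sigma):=\max_j a(\sigma)_j/(b(\sigma)_j+1)$ — a local diffeomorphism since $a(\sigma)_{j_0}\geq 1$ and $u_\sigma\neq 0$ — removes the unit while preserving the exponent vectors, and after an obvious reduction puts the chart integral in the monomial form of Lemma \ref{KeyLemma}. Hence its modulus is $\leq C_\varphi\lambda^{-1/d(\sigma)}\ln^{M(\sigma)-1}\lambda$ with $M(\sigma)=\#\{j:a(\sigma)_j/(b(\sigma)_j+1)=d(\sigma)\}$.

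It remains to match $\min_\sigma 1/d(\sigma)$ with $\lfloor\beta+\mathbf 1\rfloor$ and, among the cones realizing this minimum, $\max M(\sigma)$ with $d_\beta$. Set $\phi(v)=\langle\beta+\mathbf 1,v\rangle/\min_{\alpha\in\mathcal{N}_f}\langle v,\alpha\rangle$ for $v\geq 0$ with $\min_{\alpha}\langle v,\alpha\rangle\neq 0$; then $1/d(\sigma)=\min_j\phi(v_j)$, so $\min_\sigma 1/d(\sigma)$ is the minimum of $\phi$ over the rays of $\Sigma$. On the other hand $(\beta+\mathbf 1)/c\in\mathcal{N}_f$ holds precisely when $c\leq\phi(v)$ for all such $v$, so $\lfloor\beta+\mathbf 1\rfloor=\inf_{v}\phi(v)$; as $\phi$ is a quotient of two linear functionals on each maximal cone of the normal fan of $\mathcal{N}_f$, it is monotone along line segments and attains its infimum on the rays of that fan, which are among the rays of the refinement $\Sigma$ — hence $\min_\sigma 1/d(\sigma)=\lfloor\beta+\mathbf 1\rfloor$, and this value is attained. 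For the logarithmic power, if $1/d(\sigma)=\lfloor\beta+\mathbf 1\rfloor$ then each of the $M(\sigma)$ rays $v_j$ achieving the minimum satisfies $\langle v_j,P\rangle=\min_{\alpha\in\mathcal{N}_f}\langle v_j,\alpha\rangle$ for $P=(\beta+\mathbf 1)/\lfloor\beta+\mathbf 1\rfloor$, i.e.\ $P$ lies on the face of $\mathcal{N}_f$ exposed by $v_j$; every such face contains the minimal face $\gamma_0$ through $P$, whose normal cone has dimension $\mathrm{codim}\,\gamma_0=d_\beta$, so the linearly independent vectors $v_j$ all lie in it and $M(\sigma)\leq d_\beta$. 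Summing the finitely many chart bounds then yields $\abs{I(\lambda)}\leq C_\varphi\lambda^{-\lfloor\beta+\mathbf 1\rfloor}\ln^{d_\beta-1}\lambda$ under the real-analytic, $\R$-nondegenerate hypotheses.

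The step I expect to resist most is the combinatorial identification just sketched: one must verify carefully that refining the normal fan neither lowers $\min_\sigma 1/d(\sigma)$ below $\lfloor\beta+\mathbf 1\rfloor$ nor pushes $\max M(\sigma)$ past $d_\beta$, and this hinges on the convexity of $\mathcal{N}_f$ and on the precise definitions of $\lfloor\beta+\mathbf 1\rfloor$ and $d_\beta$, including the treatment of unbounded faces and of the rays $v$ with vanishing support function (which feed into charts where $a(\sigma)$ has a zero coordinate and one simply integrates that variable out first). The toric side — existence of a fan adapted to $\mathcal{N}_f$ that monomializes $f$, where $\R$-nondegeneracy is used, the partition of unity near the divisors, and disposal of the non-exceptional region — is routine but lengthy. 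Finally, for \emph{general smooth} phases, which is the true generality of Gilula's theorem, the toric resolution above is unavailable and must be traded for a coarser, Greenblatt-type construction that does not produce a clean monomial-times-unit phase; the additional lower-order terms then force one extra factor of $\ln\lambda$, and this is precisely the "logarithmic factor" acknowledged in the statement.
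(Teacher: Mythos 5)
The decisive gap is your assertion that the non-monomial factor $u_\sigma$ in $f\circ\pi_\sigma(y)=y^{a(\sigma)}u_\sigma(y)$ is ``nowhere vanishing on the pertinent set''. That cannot be arranged by shrinking $\supp\varphi$: since $\pi$ contracts the exceptional divisor, $\pi^{-1}(\supp\varphi)$ always contains points with $y_j=0$ for the exceptional indices but with the remaining coordinates of unit size, and at such points $u_\sigma$ reduces to the face part $\bar{f_\sigma}(y_{s+1},\dots,y_n)$, which in general does vanish (already for $f=x_1^2-x_2^2$, in the chart $x_1=y_1$, $x_2=y_1y_2$ one has $u=1-y_2^2$, vanishing at $(0,\pm 1)\in\pi^{-1}(0)$). $\R$-nondegeneracy does not forbid these zeros; it only guarantees $\nabla\bar{f_\sigma}\neq 0$ there. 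Consequently your substitution $w=y_{j_0}\,u_\sigma(y)^{1/a(\sigma)_{j_0}}$ is not a change of variables with uniform constants near this zero set (and taking the real $a(\sigma)_{j_0}$-th root needs a fixed sign of $u_\sigma$, which is also lost there), so the reduction to Lemma \ref{KeyLemma} breaks down on exactly the charts that matter. The paper's own argument is organized around this point: it splits each chart by a partition $1=\sum_j\psi_j+\sum_l\eta_l$, treats the $\psi_j$ pieces (where $\bar{f_\sigma}$ is bounded away from zero) the way you do, and on the $\eta_l$ pieces uses nondegeneracy to take $u_{s+1}=\bar{f_\sigma}+y^{p}h(y)$ as a new coordinate, after which that variable enters the monomial phase with exponent $1$; when the resulting $1/d$ is an integer an integration by parts is required and costs one extra factor of $\ln\lambda$. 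This is precisely why the paper claims only to recover Gilula's theorem ``up to a logarithmic factor'' (and why its main theorem has $\ln^{k}\lambda$ rather than $\ln^{k-1}\lambda$ when $1/d_f$ is an integer). By suppressing the zero set of $u_\sigma$ you are claiming the sharp $\ln^{d_\beta-1}\lambda$ bound by a route that, as written, does not deliver it.

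The remainder of your proposal does follow the paper's strategy and is essentially sound: the chart computations, the reduction to Lemma \ref{KeyLemma}, and the combinatorial identification $\min_\sigma 1/d(\sigma)=\lfloor\beta+\mathbf{1}\rfloor$ together with $M(\sigma)\leq d_\beta$ (via the normal cone of the minimal face through $(\beta+\mathbf{1})/\lfloor\beta+\mathbf{1}\rfloor$, and the fact that rays of the normal fan survive any subdivision) are the weighted analogue of the inequality $l(\alpha^j)/\abs{\alpha^j}\leq d_f$ used for the main theorem. Note, however, that your closing diagnosis misplaces the source of the logarithmic caveat: in this paper the possible extra $\ln\lambda$ is not an artifact of passing from real-analytic to general smooth phases, but arises already for real-analytic $\R$-nondegenerate phases from the $\eta_l$ pieces and the integer-$1/d$ integration by parts. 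To obtain the clean $\ln^{d_\beta-1}\lambda$ stated in the theorem you would need either to show that the problematic charts never simultaneously realize the minimal decay $\lambda^{-\lfloor\beta+\mathbf{1}\rfloor}$ and the maximal logarithmic power, or to argue by Gilula's own real-variable method rather than by the toric reduction used here.
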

	
	\section{Key Facts About Toric Varieties}
	
	Although this material is standard for algebraic geometers, we include it here for the convenience of readers,  especially analysts who may be less familiar with the algebraic-geometric background.
	
	A toric variety is an object in algebraic geometry that plays a central role in making explicit the correspondence between combinatorial geometry and algebraic geometry. It provides an ideal testing ground for general theories, since many geometric and analytic phenomena can be translated into concrete combinatorial data.
	
	Our ultimate goal is to construct a toric variety associated with the Newton polyhedron of the phase function. Before doing so, we introduce the basic objects underlying this construction: cones and fans.
	
	In this paper, we only consider \emph{rational cones} $\sigma\subset \R^n$, which are generated by finitely many elements of $\mathbb{Z}^n$. That is, given a collection of vectors $\{v_1,\cdots, v_n\}$ in $\mathbb{Z}^n$, the cone they generate is 
	\begin{equation}
		\sigma=\left\{\mu_1v_1+\cdots+\mu_nv_n, \mu_1,\cdots,\mu_n \geq0\right\}.
	\end{equation}
	The \emph{skeletons} of a cone $\sigma$ is the set of primitive vectors that generate $\sigma$; that is, the minimal nonzero integer vectors along the edges of $\sigma$. 
	
	A rational cone is called \emph{simplicial} if the vectors making up its skeleton are linearly independent.
	
	The \emph{fan}  $\Sigma$ in $\R^n$ is a finite collection of rational polyhedral cones satisfying the following conditions:
	\begin{itemize}
		\item Every face of a cone in $\Sigma$ is also a cone in $\Sigma$;
		\item The intersection of any two cones in $\Sigma$ is also a face of each.
	\end{itemize}
	
	For a fan $\Sigma$, the union $\abs{\Sigma}:=\bigcup_{\sigma\in\Sigma}\sigma$ is called the \emph{support} of $\Sigma$.
	
	A fan $\Sigma$ is said to be \emph{simple} if each cone in $\Sigma$ is simplicial and the skeletons of any cone can be extended to a basis of the integer lattice of the whole space.
	
	An important fact about fan is the existence of \emph{simplicial subdivisions}. Given a fan $\Sigma_0$ , the exists a fan $\Sigma$, called a simplicial subdivision of $\Sigma_0$, satisfying
	\begin{itemize}
		\item $\Sigma_0$ and $\Sigma$ have the same support;
		\item Each cone of $\Sigma$ is contained in some cone of $\Sigma_0$;
		\item $\Sigma$ is simple.
	\end{itemize}
	
	Such subdivisions are fundamental in toric geometry, particularly for resolving singularities while maintaining control over the combinatorial structure.
	
	We now define a rational map that is central to the construction of a toric variety. Let $\Sigma$ be a fan in $\R^n$, not necessarily simple. Consider an $n-$dimensional cone $\sigma\in\Sigma$ generated by skeletons consisting of vectors
	\begin{equation}
		\alpha^1=(\alpha_1^1,\cdots,\alpha_n^1),\cdots,
		\alpha^n=(\alpha_1^n,\cdots,\alpha_n^n).
	\end{equation}
	Given such a cone $\sigma$, we associate a copy of $\R^n$ denoted by $\R^n(\sigma)$. Define the map
	\begin{equation}
		\pi(\sigma):\R^n(\sigma)\to \R^n:\quad \pi(\sigma)(y_1,\cdots,y_n)=(x_1,\cdots,x_n),
	\end{equation}
	where
	\begin{equation}
		x_j=y_1^{\alpha_j^1}\cdots y_n^{\alpha_j^n}, \quad j=1,\cdots,n.
	\end{equation}
	The toric variety $X_\Sigma$ is then obtained by gluing the copies $\R^n(\sigma)$ along the images of the maps $\pi(\sigma)$ for all $\sigma\in\Sigma$. More explicitly, let $\sigma, \sigma'\in \Sigma$ be two $n-$dimensional cones, and let $x\in \R^n(\sigma), x'\in \R^n(\sigma')$, we identify $x$ and $x'$ if
	\begin{equation}
		\pi^{-1}(\sigma')\circ\pi(\sigma):x\mapsto x',
	\end{equation} 
	that is, a point in the chart $\R^n(\sigma)$ is equivalent to a point in $\R^n(\sigma')$ whenever the monomial map relating the two charts is defined at $x$ and maps it into $x'$. This identification glues the charts together to form the toric variety $X_\Sigma$.
	
	The key properties of toric varieties for our purposes are summarized as follows:
	
	\textbf{Facts:}
	\begin{itemize}
		\item \emph{If the fan $\Sigma$ is simple, then the associated toric variety $X_\Sigma$ is smooth.}
		\item \emph{The map $\pi:X_\Sigma\to \R^n$ defined on each $\R^n(\sigma)$ by $\pi(\sigma)$ is proper, that is, the preimage of a compact set is compact.}
	\end{itemize}
	Based on the preceding facts, we are now ready to construct the smooth toric variety associated with the Newton polyhedron.
	Consider the real-analytic function introduced in Section 3. For any top-dimensional face $\gamma$ of its Newton polyhedron, let $\xi\in \mathbb{Z}_+^n$ denote the primitive normal vector to $\gamma$. Then $\gamma$ is characterized by the supporting hyperplane equation
	\begin{equation}
		<\xi, x>=\min_{v\in V}<\xi, v>, 
	\end{equation}
	where $V$ denotes the set of all vertices of $N_f$. We write
	\begin{equation}
		l(\xi)=\min_{v\in V}<\xi,v>
	\end{equation}
	for this minimal value.
	The collection of all such primitive normal vectors forms the set of generators (the skeleton) of a fan, denoted by $\Sigma_0$, whose support is the positive orthant $\R_+^n$. 
	From the facts in the preceding section, the fan $\Sigma_0$ admits a simplicial subdivision $\Sigma$. Consequently, the fan $\Sigma$ determines a smooth toric variety $X_\Sigma$.

	\section{Proof of the Main Theorem}
	\begin{proof}
		We transform the oscillatory integral over $\R^n$ into an integral over the smooth toric variety $X_\Sigma$:
		\begin{equation}
			I(\lambda)=\int_{\R^n}e^{i\lambda f(x)}\varphi(x)\dint x=\int_{X_\Sigma}e^{i\lambda f\circ \pi(y)}\varphi\circ\pi(y) \abs{J_\pi(y)}\dint y.
		\end{equation}
	
	Since $\pi$ is proper, and $\varphi$ has compact support. It suffices to integrate over the restricted domain $X_\Sigma\bigcap \pi^{-1}(\supp \varphi)$. Using a covering altlas $\{U_k\}$ of $X_\Sigma$ and a partition of unity, we can decompose $I(\lambda)$ as
	\begin{equation}
		I(\lambda)=\sum_k\int_{X_\Sigma\bigcap \pi^{-1}(\supp \varphi)\bigcap U_k}e^{i\lambda f\circ \pi(y)}\varphi\circ\pi(y) \varphi_k(y)\abs{J_\pi(y)}\dint y.
	\end{equation}
	Let $k$ be fixed, and set
	\begin{equation}
		V=X_\Sigma\bigcap \pi^{-1}(\supp \varphi)\bigcap U_k,
	\end{equation}
	and for convenience, we continue to denote by $I(\lambda)$ the integral over $V$. On such a chart $U_k$, the coordinates are given by the rational map associated with an $n-$dimensional cone $\sigma\in \Sigma$, whose skeletons are 
	\begin{equation}
		\alpha^1=(\alpha_1^1,\cdots,\alpha_n^1),\cdots,
		\alpha^n=(\alpha_1^n,\cdots,\alpha_n^n).
	\end{equation}
	A direction computation shows that the Jacobian of $\pi(\sigma)$ is 
	\begin{equation}
		J_{\pi(\sigma)}(y)=\abs{y_1}^{\abs{\alpha^1}-1}\cdots \abs{y_n}^{\abs{\alpha^n}-1},\quad \abs{\alpha^j}:=\alpha_1^j+\cdots+\alpha_n^j.
	\end{equation}
	We now examine the form of the phase function on the chart $U_k$. By construction of the skeleton vectors $\alpha^1,\cdots,\alpha^n$, associated with the cone $\sigma$,
	the supporting hyperplane orthogonal to these vectors meets the Newton polyhedron $\mathcal{N}_f$ in a single vertex. Consequently, the pullback of $f$ under the associated monomial map $\pi(\sigma)$ may be written as
	\begin{equation}
		f\circ\pi(\sigma)(y)=y_1^{l(\alpha^1)}\cdots y_n^{l(\alpha^n)}f_\sigma(y),
	\end{equation}
	and $f_\sigma(0)\neq 0$.
	
	The exceptional set $\pi(\sigma)^{-1}(0)$ is a union of coordinate hyperplanes:
	\begin{equation}
		\pi(\sigma)^{-1}(0)=\bigcup_{j\in I}\{y_j=0\}
	\end{equation}	
	where $I\subset \{1,\cdots,n\}$ collects the indices corresponding to a compact face $\gamma$. Without loss of generality, we may assume $I=\{1,\cdots,s\}$, so that $\gamma$ is	associated with the set of skeleton vectors $\left\{\alpha^1,\cdots,\alpha^s\right\}$. 
	
	For any set $J\subset \{1,\cdots,n\}$, define
	\begin{equation}
		T_{J}=\left\{(y_1,\cdots, y_n): y_j=0, j\in J\right\}
	\end{equation}
	which satisfies
	\begin{equation}
		\pi(\sigma)(T_J)=0.
	\end{equation}
	It is clear that $T_J\subset \pi(\sigma)^{-1}(0)$.
	
	The $\R-$nondegeneracy of $f_\sigma$ yields a fundamental structural property of $f_\sigma(y)$: the set
	\begin{equation}
		\left\{y\in \pi^{-1}(0): f_\sigma(y)=0 \right\}
	\end{equation}
	is nonsingular. Consequently, for every point  $P\in T_J$, there exists an index $j'\notin J$ such that 
	\begin{equation}
		\frac{\partial f_\sigma}{\partial y_{j'}}(P)\neq 0.
	\end{equation}
	This fact originates in Varchenko’s analysis \cite{Var76}, and a detailed exposition may be found in \cite{Nose2012}.
	
	We proceed to estimate
	\begin{equation}
		I(\lambda)=\int_{V}e^{i\lambda y_1^{l(\alpha^1)}\cdots y_n^{l(\alpha^n)}f_\sigma(y)}\abs{y_1}^{\abs{\alpha^1}-1}\cdots \abs{y_n}^{\abs{\alpha^n}-1}\varphi_k(y)\varphi\circ\pi(\sigma)(y) \dint y
	\end{equation} 
	By a partition of unity, we decompose $V$ into finitely many pieces.
	\begin{equation}
		1=\sum_k\psi_k+\sum_l\eta_l.
	\end{equation}
	Each $\psi_k$ is supported where $f_\sigma(y)$ does not change sign, while the union of the supports of the $\eta_l$ contains all zeroes of $f_\sigma(y)$. Inserting this decomposition into $I(\lambda)$ we obtain corresponding integrals $I_k(\lambda)$ and $I_l(\lambda)$. It suffices to estimate each piece separately.
	
	On the support of $\psi_k$, the function $f_\sigma(y)$ is bounded from zero. Thus $f_\sigma(y)$ may be absorbed into any one of the variables of $y_{s+1}, \cdots,y_n$. For definiteness, we introduce
	\begin{equation}
		u_j=y_j \quad  (j\neq s+1),\quad 	u_{s+1}=y_{s+1}f_\sigma(y).
	\end{equation}
	This is an invertible change of variables provided the support of $\psi_k$ is chosen sufficiently small. After this substitution, the phase becomes a pure monomial, then
	\begin{equation}
			I_j(\lambda)=\int_{\R^n}e^{i\lambda u_1^{l(\alpha^1)}\cdots  u_n^{l(\alpha^n)}}\abs{u_1}^{\abs{\alpha^1}-1}\cdots \abs{u_n}^{\abs{\alpha^n}-1}\Phi_j(u)\dint u.		
	\end{equation}
	Applying Lemma \ref{KeyLemma}, and notice that
	\begin{equation}
		 \frac{l(\alpha^j)}{\abs{\alpha^j}}\leq d_f, \quad j=1,\cdots, n,
	\end{equation}
	it implies 
	\begin{equation}
		\abs{I_j(\lambda)}\leq C_{\Phi_j}\lambda^{-\frac{1}{d_f}}\ln^{k-1}\lambda.
	\end{equation}
	 Hence the required bound follows for each $I_j(\lambda)$
	
	Since the region of integration is localized in a thin neighborhood of the union of coordinate hyperplanes $\pi(\sigma)^{-1}(0)$, we may assume that each $\eta_l$ is supported in a small neighborhood of a point
	\begin{equation}
		w^l=(w_1^l,\cdots, w_n^l)\in T_{J_l}\bigcap \{y: f_\sigma(y)=0\}.
	\end{equation} 
	By the discussion above, we may, without loss of generality, take the index $j'$ appearing earlier to be $s+1\notin J_l$, so that
	\begin{equation}
		\partial_{y_{s+1}}f_\sigma(w^l)\neq 0 \quad \text{and } \quad w_j^l\neq 0 \text{ for } j\notin J_l.
	\end{equation} 
	Thus, by shrinking the support of $\eta_l$ if necessary, we may assume that for all $y=(y_1,\cdots,y_n)\in \supp (\eta_l)$, one has $y_{s+1}\neq 0$.
	Consequently, we may introduce the following change of variables:
	\begin{equation}
	u_j=y_j \quad  (j\neq s+1),\quad 	u_{s+1}=y_{s+1}^{l(\alpha^{s+1})}f_\sigma(y).
	\end{equation}
	This transformation is smooth and invertible. In the new coordinates, the phase again reduces to a monomial. Specifically,
	\begin{equation}
		I_l(\lambda)=\int_{\R^n}e^{i\lambda u_1^{l(\alpha^1)}\cdots u_{s+1} \cdots u_n^{l(\alpha^n)}}\abs{u_1}^{\abs{\alpha^1}-1}\cdots \abs{u_{s-1}^{l(\alpha^{s-1})}}\abs{u_{s+1}^{l(\alpha^{s+1})}} \abs{u_n}^{\abs{\alpha^n}-1}\Psi_l(u)\dint u.
	\end{equation} 
	In the case $d_f> 1$, we may directly apply Lemma \ref{KeyLemma} to obtain the desired upper bound. Suppose now $d_f\leq 1$.
	For this integral $	I_l(\lambda)$, we may assume without loss of generality that
	\begin{equation}
		\frac{l(\alpha^1)}{\abs{\alpha^1}}=\max_j \frac{l(\alpha^j)}{\abs{\alpha^j}}.
	\end{equation}
	Fix $u_1,\cdots, u_{s}, u_{s+2},\cdots, u_n$, and perform $N$ integrations by parts with respect to $u_{s+1}$, choosing $N$ such that
	\begin{equation}
			\abs{\alpha^1}-N\alpha_1\geq 0 \quad 
		\text{and}\quad \abs{\alpha^1}-(N+1)\alpha_1\leq 0.
	\end{equation}
	Then the main term of $	I_l(\lambda)$ is
	\begin{align}
		&\frac{C}{\lambda^N}\int_{\R^n}e^{i\lambda u_1^{l(\alpha^1)}\cdots u_{s+1} \cdots u_n^{l(\alpha^n)}}\abs{u_1}^{\abs{\alpha^1}-1}u_1^{-Nl(\alpha^1)}\cdots \abs{u_{s-1}^{l(\alpha^{s-1})}}u_{s-1}^{-Nl(\alpha^{s-1})}\times\\
		&\abs{u_{s+1}^{l(\alpha^{s+1})}}u_{s+1}^{-Nl(\alpha^{s+1})}\cdots \abs{u_n}^{\abs{\alpha^n}-1}u_n^{-Nl(\alpha^n)}\Psi_l(u)\dint u.
	\end{align}
	Observe that
	\begin{equation}
		\frac{l(\alpha^1)}{\abs{\alpha^1}-Nl(\alpha^1)}=\max_j \frac{l(\alpha^j)}{\abs{\alpha^j}-Nl(\alpha^j)}.
	\end{equation}
	Hence, an application of Lemma \ref{KeyLemma} once more yields 
	\begin{equation}
	 	\abs{I_l(\lambda)}\leq C_{\Phi_j}\lambda^{-\frac{1}{d_f}}\ln^{k}\lambda,
	 \end{equation}
	 where the additional power of the logarithm arises from the contribution of the variable $u_{s+1}$.
	 
	 Summing over all pieces then completes the proof.
\end{proof}

\section*{Acknowledgments}
	The author is supported by the National Natural Science Foundation of China (Grant No. 12501124) and the Jiangsu Natural Science Foundation (Grant No. BK20200308). The author thanks Prof. Xiaochun Li for sharing the insight that resolution of singularities plays a fundamental role in analysis. The author is grateful to Prof. Changxing Miao for inviting him to visit in 2023 and for encouraging him to pursue this meaningful topic. The author also thanks Prof. Toshihiro Nose for generously sharing his PhD thesis, from which the author learned many details about the rigorous analysis of toric resolution of singularities. This work was completed during a visit to Prof. Shaoming Guo at the Chern Institute of Mathematics; the author thanks Prof. Guo and the institute for their hospitality.


\end{document}